\newtheorem*{thm}{Theorem}
\newtheorem{theorem}{Theorem}[section]
\newtheorem{lemma}{Lemma}[section]
\newtheorem{corollary}{Corollary}[section]
\newtheorem{example}{Example}
\newtheorem{remark}{Remark}
\begin{document}
\title{Realization of Arbitrary Configuration of Limit Cycles of Piecewise Linear System}
\author[a,b]{Shaoqing Wang}
\author[a,c]{Jiazhong Yang}
\affil[a]{\small \it School of Mathematical Sciences, Peking University, Beijing 100871, China}
\affil[b]{wangshaoqing@pku.edu.cn}
\affil[c]{jyang@math.pku.edu.cn}
\date{}

 \maketitle
\begin{abstract}
In this paper, we consider the realization of configuration of limit cycles of piecewise linear systems on the plane. We show that any configuration of Jordan curves can be realized by a discontinuous piecewise linear system with two zones separated by a continuous curve.\\
\textbf{Keywords:} Piecewise linear system; Limit cycles; Configuration\\
{\bf 2000 Mathematics Subject Classification.} 34C05, 34C07, 34C25.
\end{abstract}

\section{Introduction}
One of the main branches of the qualitative theory of ordinary differential equations is to determine the number and configuration of limit cycles of planar polynomial systems, i.e. isolated periodic solutions. It can be traced back to Hilbert's sixteenth problem. As is known to all, there is a large amount of study on this problem and many good results have been obtained, but there is still a long distance to solve it completely.\\

Apart from the classical planar polynomial systems, planar piecewise smooth systems also absorb lots of attention recently, especially the bifurcation theory in piecewise smooth systems. For example, \cite{CGP} and \cite{LH} show us the application of  Hopf and homoclinic bifurcation techniques respectively in the estimation of the number of limit cycles (the definition will be given below) of planar piecewise smooth systems.\\

Among all types of the planar piecewise smooth systems, a special kind of system, i.e. piecewise linear system (PWL) is the simplest case to analyse. Except for its simpleness, people focus on the number and configuration of limit cycles of PWL for another reason that piecewise linear systems widely appear in the models of biology, chemistry and physics, which can be seen in\cite{ABB,BBCK}.\\

The number of limit cycles of piecewise linear system, by the bifurcation techniques and Melnikov methods, has drawn most attention, for instance see \cite{ALMT,BM,CLV,CLXY,CT,EL,FPRT,FPT,HY1,HY2,HY3,HZ,LC,LNT,LP,MT}. Among them, a very important case is the piecewise linear systems with two zones separated by a straight line, in which many people are interested, such as \cite{FPRT,FPT,HY1,HY3,HZ,LC,LP,MT}. Especially, in \cite{FPRT} and \cite{LC}, the continuous piecewise linear vector fields are deeply discussed by authors and they get a conclusion that there are at most one limit cycle in this kind of systems. On the other hand, in the work of \cite{FPT,HY1,LP}, the existence of a discontinuous PWL with two zones separated by a straight line having 3 limit cycles has been proved. But an open problem that whether 3 is the maximum number of limit cycles of discontinuous PWL with two zones separated by a straight line always haunts people.\\

Furthermore, if the separation line of PWL with two zones becomes a non-regular curve, more than three limit cycles can be obtained, see \cite{CT,LPZ}. Especially, in\cite{LPZ}, a piecewise linear system with two zones having $n$ limit cycles for any given integer $n\in \mathbf{N}$ has been obtained. Besides, the piecewise linear vector field with 3 zones also keeps the attention of many people, see the results of \cite{CLXY}.\\

Configuration of limit cycles in piecewise linear system, as important as the number of limit cycle, however, has been discussed much less than the number of limit cycles. Therefore, the aim of this paper is to consider the configuration of limit cycles of discontinuous piecewise linear systems with two zones. Different from the previous process in which we want to find a configuration of a given system, the reverse question that whether the piecewise linear system that realize a given configuration exists or not engages our interest, (later we will give its precise definition), similar to purpose of the work on polynomial system in \cite{LR}, finished by L. Jaume and G. Rodriguez.\\

As for the points in the separation of piecewise linear system, we will use the classical Filippov's terminology about sewing, sliding and escaping points, see \cite{F}. We give the definition as follows.\\

Denote by $Z=(X,Y)$, a discontinuous piecewise linear system with two zones with a boundary $\Gamma$, in one zone linear vector field $X$ is set and in the other we have the linear vector field $Y$. Then there are three types of points in the separation $\Sigma$: the \textit{sliding point} $p\in \Gamma$ means that $X(p)$ and $Y(p)$ point inward $\Sigma$; if $X(p)$ and $Y(p)$ point outward $\Sigma$, we call that $p\in \Gamma$ is the \textit{escaping point}; similarly, $p\in \Gamma$ is called the \textit{sewing piont} if $X(p)$ and $Y(p)$ point to the same direction and are transverse to $\Sigma$.\\

Recall that a limit cycle in a classical planar system is an isolated periodic orbit. Here for a piecewise linear system, a limit cycle, a special periodic solution, also has the property of isolation, as well as the condition that it intersects with the separation line at sewing points only.\\

By \textit{a configuration of limit cycles}, we mean a finite number of disjoint Jordan curves in $\mathbf{R}^2$. More exactly, it is a set $\mathcal{C}$ consisting of $C_1,\cdots,C_n$, which are disjoint simple closed curves, such that $C_i\cap C_j=\emptyset$ ($i\neq j$). And two configurations $\mathcal{C}=\{C_1,\cdots,C_n\}$ and $\mathcal{C^{'}}=\{C^{'}_1,\cdots,C^{'}_m\}$ are called \textit{topologically equivalent} if there exists a function $H:\mathbf{R}^2\rightarrow\mathbf{R}^2$, such that $H$ is a homeomorphism and $H(\cup^n_{i=1}C_i)=\cup^m_{j=1}C^{'}_{j}$. Trivially, if $\mathcal{C}$ and $\mathcal{C^{'}}$ are topologically equivalent, then $n=m$. In the following part of this paper, we only consider the equivalence class of configuration.\\

Another important concept in this paper must be given here. If the configuration of a piecewise linear system $(X)$ is topologically equivalent to a given configuration $\mathcal{C}$, we say that $(X)$ \textit{realize} the configuration $\mathcal{C}$.\\

With these notation, the main result of this paper about the realization of a given configuration can be stated as follows:
\begin{thm}[A]
Let $\mathcal{C}$ be any given configuration of limit cycles, there exists a piecewise linear system with two zones to realize it.
\end{thm}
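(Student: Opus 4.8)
The plan is to establish Theorem (A) by an explicit construction, after first passing to a convenient representative of the topological equivalence class of $\mathcal C$. Since $C_1,\dots,C_n$ are disjoint Jordan curves, ordering them by inclusion of their bounded complementary components turns $\mathcal C$ into a finite rooted forest $F$ (a node for each $C_i$, with $C_i$ a child of $C_j$ when the disk bounded by $C_i$ lies inside that of $C_j$ and in no smaller one). By the Jordan--Schoenflies theorem the equivalence class of $\mathcal C$ is determined by $F$ alone, and the ordering of siblings is irrelevant because finitely many disjoint disks of the plane can be permuted by a global homeomorphism. So I would fix once and for all a concrete geometric model realizing $F$ --- for instance a family of round circles centered on the $x$-axis whose defining $x$-intervals form a non-crossing family with the nesting pattern of $F$ --- and it then suffices to produce one piecewise linear system having exactly these circles (or curves isotopic to them) as its full set of limit cycles.

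One natural way to organize the construction is modularly, recursing on $F$: realize the innermost cycles inside small disks first, then surround an already-built cluster by a new cycle corresponding to a parent node, and finally place the several trees of $F$ side by side in the plane. This isolates two elementary operations --- a \emph{building block} (insert one more hyperbolic limit cycle around a prescribed disk without disturbing its interior) and an \emph{amalgamation} (merge two disjoint, already-realized configurations into a single system) --- and the whole statement then follows by iterating them according to $F$.

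The crux is the realization of the building block by a genuine piecewise linear mechanism. Here I would fix two linear vector fields with nonzero rotational part, $X$ on one side and $Y$ on the other side of a continuous, \emph{non-regular} (for definiteness, simple polygonal) separation curve $\Gamma$, and shape $\Gamma$ so that the candidate periodic orbits cross it transversally and only at sewing points. The curve must be highly oscillating near the cycles --- the introduction already records that a straight separation line forces very few limit cycles --- and the mechanism I have in mind is to make $\Gamma$ weave so that a single would-be loop meets it many times; the composition of the transition maps across these crossings is then a first return map on a transversal section that one can arrange to have as many fixed points as needed, at prescribed locations and with prescribed derivatives, so that choosing all derivatives different from $1$ makes every fixed point a hyperbolic (hence isolated) limit cycle with the desired stability. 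The main obstacle is that a single pair $(X,Y)$ and a single connected curve $\Gamma$ must serve the entire construction, so one cannot give each block its own vector field; the amalgamation step and the \emph{do-not-disturb-the-interior} requirement have to be built into the geometry of $\Gamma$, and one must verify that the resulting return map simultaneously encodes the full forest $F$ \emph{and} produces no extra fixed points, i.e. no spurious limit cycles.

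It then remains to check the finished system $Z=(X,Y)$: (i) the $n$ prescribed Jordan curves are genuine periodic orbits; (ii) they are isolated, because the associated return map is hyperbolic at each of them; (iii) each meets $\Gamma$ only at sewing points, by verifying that $X$ and $Y$ are transverse to $\Gamma$ along these orbits and cross it coherently there; and (iv) the system has no further limit cycles, by a global analysis of the return map together with an inspection of the sliding and escaping subsets of $\Gamma$ to rule out pseudo-periodic Filippov trajectories. Steps (i)--(iii) should reduce to explicit but routine computations with the linear flows and the polygonal curve $\Gamma$; step (iv), together with the forest-encoding of the previous paragraph, is the delicate point and will require the most care.
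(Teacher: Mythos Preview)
Your outline is sound at the combinatorial level---reducing $\mathcal C$ to a rooted forest and recursing on it is exactly how the paper is organized (Theorem~3.1 for one layer of ``coordinated'' cycles, Corollary~3.1 and Theorem~3.2 for deeper nestings, then the general case). But the proposal stops precisely where the real work begins: you never construct the building block. The sentence ``the composition of the transition maps \dots\ is then a first return map \dots\ that one can arrange to have as many fixed points as needed, at prescribed locations and with prescribed derivatives'' is an assertion, not a construction; and step~(iv), ruling out spurious limit cycles, you yourself flag as ``the delicate point'' and leave open. With $X$, $Y$ and the polygonal $\Gamma$ all unspecified, nothing has been proved.

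The paper supplies exactly the missing mechanism, and it is quite different from what you sketch. Rather than generic linear fields and a return-map/hyperbolicity argument, it takes \emph{both} pieces to be linear \emph{centers} on the $y$-axis, $\dot x=-y,\ \dot y=x$ and $\dot x=-(y-a),\ \dot y=x$, so each has a global first integral $x^2+y^2$, resp.\ $x^2+(y-a)^2$. A crossing periodic orbit must meet the separation curve at two points lying on the same level set of \emph{both} integrals, which forces the two crossing points to be mirror images in the $y$-axis (Lemmas~\ref{sepa1}--\ref{thirdlemma}). This single observation simultaneously (a) gives the building block---put on $\Gamma$ a short graph of a sinusoid having exactly one $y$-axis-symmetric pair of points, and a limit cycle through that pair is forced---and (b) makes your step~(iv) immediate: any extra limit cycle would require an extra symmetric pair on $\Gamma$, and one simply designs $\Gamma$ so that there are none. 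The recursion on the forest is then implemented by a ``Key Curve'' with one tooth per child, teeth being replaced by smaller Key Curves for deeper nesting. No return map is ever analyzed and no hyperbolicity is invoked.

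So the gap is concrete: you must either carry out your return-map programme in full (fix $X,Y,\Gamma$, compute the first return explicitly, and verify (i)--(iv), especially (iv)), or adopt a device---like the paper's two-center symmetry trick---that makes both the construction and the exclusion of extra cycles explicit rather than aspirational.
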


We shall prove this theorem and show the construction process of the piecewise linear system in the following sections. In section 2, we give some lemmas and we use these lemmas to prove the main theorem in section 3.\\

\section{Some Lemmas}
Before we pay attention to construction of piecewise linear system with two zones to realize any given configuration of limit cycles, some preparation are needed.\\

Suppose $f(x)$ is a well-defined continuous function on $\textbf{R}$ and $f(0)\geq 0.$ Then its graph $\{(x,f(x))|x\in\textbf{R}\}$ separates the whole plane $\textbf{R}^2$ into two zones. In the lower part $\{(x,y)|y<f(x),x\in \mathbf{R}\}$, we set a linear system $(X_1)$ of which the singularity is a virtual center at $(0,a)$, where $a>f(0)$; i.e. $(X_1): \dot{x}=-(y-a), \dot{y}=x$. In the upper domain $\{(x,y)|y>f(x),x\in \mathbf{R}\}$, we also have a linear system $(X_2): \dot{x}=-y, \dot{y}=x$ with virtual center at $(0,0)$. Obviously, the first integrals of $(X_1)$ and $(X_2)$ are $H_1(x,y)=x^2+(y-a)^2$ and $H_2(x,y)=x^2+y^2$, respectively. Hence a piecewise linear system $X=(X_1,X_2)$ exists.\\

In the following lemma, we give a necessary condition of existence of periodic orbits in the system $X$.
\begin{lemma}\label{sepa1}
If there exists a periodic orbit $C$ in the above piecewise linear system $X$ and $C$ intersects the separation line at $(x_1,f(x_1))$ and $(x_2,f(x_2))$, where $x_1\neq x_2$, then $x_1=-x_2$ and $f(x_1)=f(x_2)$, it means that $(x_1,f(x_1))$ and $(x_2,f(x_2))$ are symmetric with respect to $y$-axis.
\end{lemma}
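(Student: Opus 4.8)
The plan is to exploit the fact that the two first integrals are explicitly known, so that any piece of a periodic orbit lying in a single zone is confined to a level curve of the corresponding $H_i$. Concretely, I would first argue that the periodic orbit $C$ decomposes into an arc contained in the closed lower region, along which $C$ follows $(X_1)$, and an arc contained in the closed upper region, along which $C$ follows $(X_2)$; since the intersection points with $\Sigma$ are sewing points, these two arcs meet exactly at the two boundary points $P_1=(x_1,f(x_1))$ and $P_2=(x_2,f(x_2))$, and each arc joins $P_1$ to $P_2$. Because $H_1$ is constant along orbits of $(X_1)$ and $H_2$ is constant along orbits of $(X_2)$, this forces
\[
H_1(P_1)=H_1(P_2),\qquad H_2(P_1)=H_2(P_2),
\]
i.e.
\[
x_1^2+(f(x_1)-a)^2=x_2^2+(f(x_2)-a)^2,\qquad x_1^2+f(x_1)^2=x_2^2+f(x_2)^2 .
\]

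Next I would simply subtract these two identities. The quadratic terms $x_i^2$ and $f(x_i)^2$ cancel, leaving the linear relation
\[
-2a\,f(x_1)+a^2=-2a\,f(x_2)+a^2,
\]
hence $a\bigl(f(x_1)-f(x_2)\bigr)=0$. Since $a>f(0)\ge 0$ we have $a\neq 0$, so $f(x_1)=f(x_2)$. Substituting this back into the second identity gives $x_1^2=x_2^2$, and the hypothesis $x_1\neq x_2$ then yields $x_1=-x_2$. Together with $f(x_1)=f(x_2)$ this is exactly the asserted symmetry of the two crossing points with respect to the $y$-axis.

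I do not expect a serious obstacle here; the computation is essentially a one-line elimination once the right equations are written down. The only point that needs a careful (but easy) justification is the structural claim that $C$ consists of precisely one $(X_1)$-arc and one $(X_2)$-arc joining $P_1$ and $P_2$, so that $P_1$ and $P_2$ genuinely lie on a common level curve of each first integral; this follows from the definition of a limit cycle of a two-zone piecewise linear system (it meets $\Sigma$ only at sewing points, so in each zone it is an arc of the corresponding linear flow) together with the hypothesis that $C\cap\Sigma$ consists of the two listed points. If one wanted to allow more crossings, one would note that $H_1$ (resp. $H_2$) is still constant on each maximal arc of $C$ in the lower (resp. upper) region and run the same cancellation on any two consecutive crossings, but for the statement as given the two-point case suffices.
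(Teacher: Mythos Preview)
Your proposal is correct and follows essentially the same approach as the paper: derive the two equations $H_1(P_1)=H_1(P_2)$ and $H_2(P_1)=H_2(P_2)$ from the first integrals and solve. You are in fact more explicit than the paper, spelling out the subtraction step, the use of $a\neq 0$, and the use of $x_1\neq x_2$ to pass from $x_1^2=x_2^2$ to $x_1=-x_2$, all of which the paper leaves implicit.
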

\begin{proof}
From the condition, we have that
\begin{eqnarray*}
&&(x_1,f(x_1))\ \text{and} \ (x_2,f(x_2)) \ \text{are in}\ C,\\
&\Leftrightarrow& (x_1,f(x_1)) \ \text{and} \  (x_2,f(x_2))\ \text{are in the same orbits of}\ (X_1)\ \text{and} \ (X_2),
\end{eqnarray*}
which is equivalent to $H_1(x_1,f(x_1))=H_1(x_2,f(x_2))$ and $H_2(x_1,f(x_1))=H_2(x_2,f(x_2))$, namely,
\begin{eqnarray*}
\left\{
\begin{aligned}
&x_1^2+(f(x_1)-a)^2=x_2+(f(x_2)-a)^2,&\\
&x_1^2+(f(x_1))^2=x_2^2+(f(x_2))^2,&
\end{aligned}
\right.
\end{eqnarray*}
that is
\begin{eqnarray*}
\left\{
\begin{aligned}
&x_1=-x_2,&\\
&f(x_1)=f(x_2).&
\end{aligned}
\right.
\end{eqnarray*}
Now we complete the proof of this lemma.
\end{proof}

\begin{example}\label{ex1}
Based on Lemma 2.1, a special continuous function $f(x)$ can be chosen, such that the periodic orbit which intersects with the graph of $f(x)$ is isolated, in other word, it is a limit cycle.\\

we can construct a function as follows:
\begin{equation*}
f(x)=
\left\{
\begin{aligned}
\frac{1}{2} &,& x>\frac{3}{2}\\
\frac{1}{2}\sin(\pi x)+1&,&-\frac{3}{2}\leq x\leq\frac{3}{2}\\
\frac{3}{2}&,& x<-\frac{3}{2}
\end{aligned}
\right.
\end{equation*}
and we choose $a=2$. By computing the slopes, a periodic orbit appears and passes through $(1,1)$ and $(-1,1)$. Since there are no other pairs of points symmetric with respect to $y-$axis, there are no other periodic solutions based on the conclusion of Lemma \ref{sepa1}. In the following part of this paper, we will say that the only periodic solution (i.e. limit cycle) passes through $(1,1)$ since $(1,1)$ and $(-1,1)$ appear simultaneously in this limit cycle.
\end{example}

We also need a more complicated lemma. Before this lemma, we make some explanation of notations. Suppose $f(x)$ is a continuous function on $\mathbf{R}$ and $f(x)>0$, $\forall x \in \mathbf{R}$. Then its graph $\{(x,f(x))|x\in \mathbf{R}\}$ and $x$-axis as two separation lines divide whole plane $\mathbf{R}^2$ into three domains: the first domain $I_1$ is $\{(x,y)|y>f(x),x\in \mathbf{R}\}$, the second part $I_2$ is $\{(x,y)|0<y<f(x),x\in \mathbf{R}\}$, and the third one $I_3$ defined by $\{(x,y)|y<0,x\in \mathbf{R}\}$. Now we construct a piecewise linear system. In the first and third domains $I_1$ and $I_3$, the vector field $Y_1$,$Y_3$ are dominated by the system $\dot{x}=-y$, $\dot{y}=x$. In the second domain $I_2$, the linear vector $Y_2$ is $\dot{x}=-(y-a)$, $\dot{y}=x$, where $a\geq f(0)$. Then we have a piecewise linear system $Y=(Y_1,Y_2,Y_3)$. A necessary condition of periodic orbit of system $Y$ is given in the following lemma.
\begin{lemma}\label{sepa2}
Suppose there exists a periodic orbit in $Y$ which intersects with the graph of $f(x)$. Then that two points $(x_1,f(x_1))$ and $(x_2,f(x_2))$ are at this periodic orbit leads to the following condition:
\begin{equation*}
\left\{
\begin{aligned}
&x_1=-x_2,&\\
&f(x_1)=f(x_2).&
\end{aligned}
\right.
\end{equation*}
\end{lemma}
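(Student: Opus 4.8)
The plan is to mimic the proof of Lemma \ref{sepa1}, using the fact that a periodic orbit of $Y$ that meets the graph of $f$ must, locally near such a crossing point, be an arc of a trajectory of $Y_2$ on one side and of $Y_1$ on the other. The key observation is that although the orbit may also cross the $x$-axis (entering $I_3$), the portion of the orbit lying in the closure of $I_2$ between the graph $\{y=f(x)\}$ and the $x$-axis is an arc of a single circle centered at $(0,a)$, and the portion lying in the closure of $I_1$ above the graph is an arc of a single circle centered at the origin. Hence if $(x_1,f(x_1))$ and $(x_2,f(x_2))$ both lie on the periodic orbit, they lie on a common level curve of $H_2(x,y)=x^2+y^2$ (they are joined through $I_1$, possibly after going around, but in any case both are endpoints of arcs of $Y_1$-circles, and for a periodic orbit all these $Y_1$-arcs lie on the same circle since they are glued continuously only at points of the $x$-axis, where $H_1$ and $H_2$ both must match) and on a common level curve of $H_1(x,y)=x^2+(y-a)^2$.

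First I would argue carefully that all arcs of the periodic orbit lying in $\overline{I_1}$ lie on one circle centered at $0$: consecutive $Y_1$-arcs are separated by excursions into $I_2$ (or $I_3$); at each transition across $\{y=f(x)\}$ the orbit is continuous, and at each transition across the $x$-axis between $I_2$ and $I_3$ — where both regions carry the same vector field $\dot x=-y$, $\dot y=x$ — the orbit is actually $C^1$ and stays on the same $Y_1 = Y_3$ circle. Since $I_2$ is bounded by the graph above and the $x$-axis below, and $Y_2$ has its center at $(0,a)$ with $a\ge f(0)>0$ lying above $I_2$, every arc through $I_2$ is an arc of a $Y_2$-circle whose endpoints are either on $\{y=f(x)\}$ or on the $x$-axis; combined with the previous point this forces all the $Y_1$-circle-arcs to coincide on a single circle $H_2 = r_2^2$, and similarly all the $Y_2$-arcs to lie on a single circle $H_1 = r_1^2$. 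Therefore $(x_i,f(x_i))$ for $i=1,2$ satisfy $H_1(x_1,f(x_1))=H_1(x_2,f(x_2))$ and $H_2(x_1,f(x_1))=H_2(x_2,f(x_2))$, which is exactly the system solved in Lemma \ref{sepa1}, yielding $x_1=-x_2$ and $f(x_1)=f(x_2)$.

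The main obstacle I anticipate is the bookkeeping in the first step: ruling out the possibility that the periodic orbit uses \emph{two different} $Y_1$-circles (or two different $Y_2$-circles) joined up through the three regions. One has to use that any switch between distinct circles can only happen along a curve where the vector fields disagree, i.e. only along $\{y=f(x)\}$, and that at such a crossing point $p$ the single value of the orbit at $p$ lies on both the incoming and outgoing circle — but this alone does not immediately pin the circles down, since two circles can share a point. The resolution is to track the orbit globally: a periodic orbit is a single closed curve, so reading it as a concatenation of arcs and demanding it close up, together with the fact that within $\overline{I_2}$ (a region bounded between two non-self-intersecting curves with the $Y_2$-center strictly above it) each $Y_2$-arc is monotone in a suitable sense, forces all the radii to agree. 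I would make this precise by the same slope/geometry computation indicated in Example \ref{ex1}, or alternatively by a direct continuity-plus-winding argument, and then the algebra reduces verbatim to that of Lemma \ref{sepa1}.
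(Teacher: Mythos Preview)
Your overall plan---show that any two crossings of the graph lie on a common level set of both $H_1$ and $H_2$, then invoke the algebra of Lemma~\ref{sepa1}---is exactly the paper's strategy. The paper organizes it as a two-case split (the periodic orbit does or does not meet the $x$-axis); in the first case it records, for the extra crossings $(x_3,0)$, $(x_4,0)$, the relation $x_3^2=x_4^2$ coming from the $Y_3$-arc, and then writes down $H_1(x_1,f(x_1))=H_1(x_2,f(x_2))$ and $H_2(x_1,f(x_1))=H_2(x_2,f(x_2))$ directly.

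There is, however, a concrete error in your justification. You claim that across the $x$-axis ``both regions carry the same vector field $\dot x=-y$, $\dot y=x$'', so the orbit ``stays on the same $Y_1=Y_3$ circle''. This is false: the region $I_2$ (the strip above the $x$-axis and below the graph) carries $Y_2:\dot x=-(y-a),\ \dot y=x$, not $Y_1$. Crossing the $x$-axis is a genuine switch between $Y_2$ and $Y_3$, and the orbit does \emph{not} stay on one circle there. The correct link---implicit in the paper's relation $x_3^2=x_4^2$---is that along $\{y=0\}$ one has $H_1(x,0)=x^2+a^2$ and $H_2(x,0)=x^2$, so $H_1-H_2\equiv a^2$ on the $x$-axis. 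An excursion into $I_3$ preserves $H_2$ and therefore returns to the $x$-axis at a point with the same $H_1$-value it left with; hence the $Y_2$-arc on re-entry lies on the \emph{same} $Y_2$-circle as the one that exited, and the chain
\[
H_1(x_1,f(x_1))=H_1(x_3,0)=H_1(x_4,0)=H_1(x_2,f(x_2))
\]
closes. Replace your ``same vector field'' step with this observation and your argument becomes the paper's.
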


\begin{proof}
On account of the definition of periodic orbit of piecewise linear system, if a periodic orbit $C$ intersects with the separation lines of $Y$, it must intersect with the graph of $f(x)$ because of the vector field of $Y$.\\

Since there is another separation line, namely, $x$-axis, there are two cases to be considered: $C$ intersects $x$-axis in the first case, the other case is that $C$ does not intersects $x$-axis.\\

The second case is the same as that in Lemma 2.1, so now we pay attention to the first case. Suppose that periodic orbit $C$ intersects $x$-axis at two points $(x_3,0)$ and $(x_4,0)$ as well, where $x_3$ and $x_4$ may be the same number. The figure 1 shows the meaning. ($A$, $B$ represent $(x_1,f(x_1))$ and $(x_2,f(x_2))$ respectively, and $C$, $D$ are $(x_3,0)$ and $(x_4,0)$ respectively.)\\
\includegraphics[width=0.4\textwidth,angle=0]{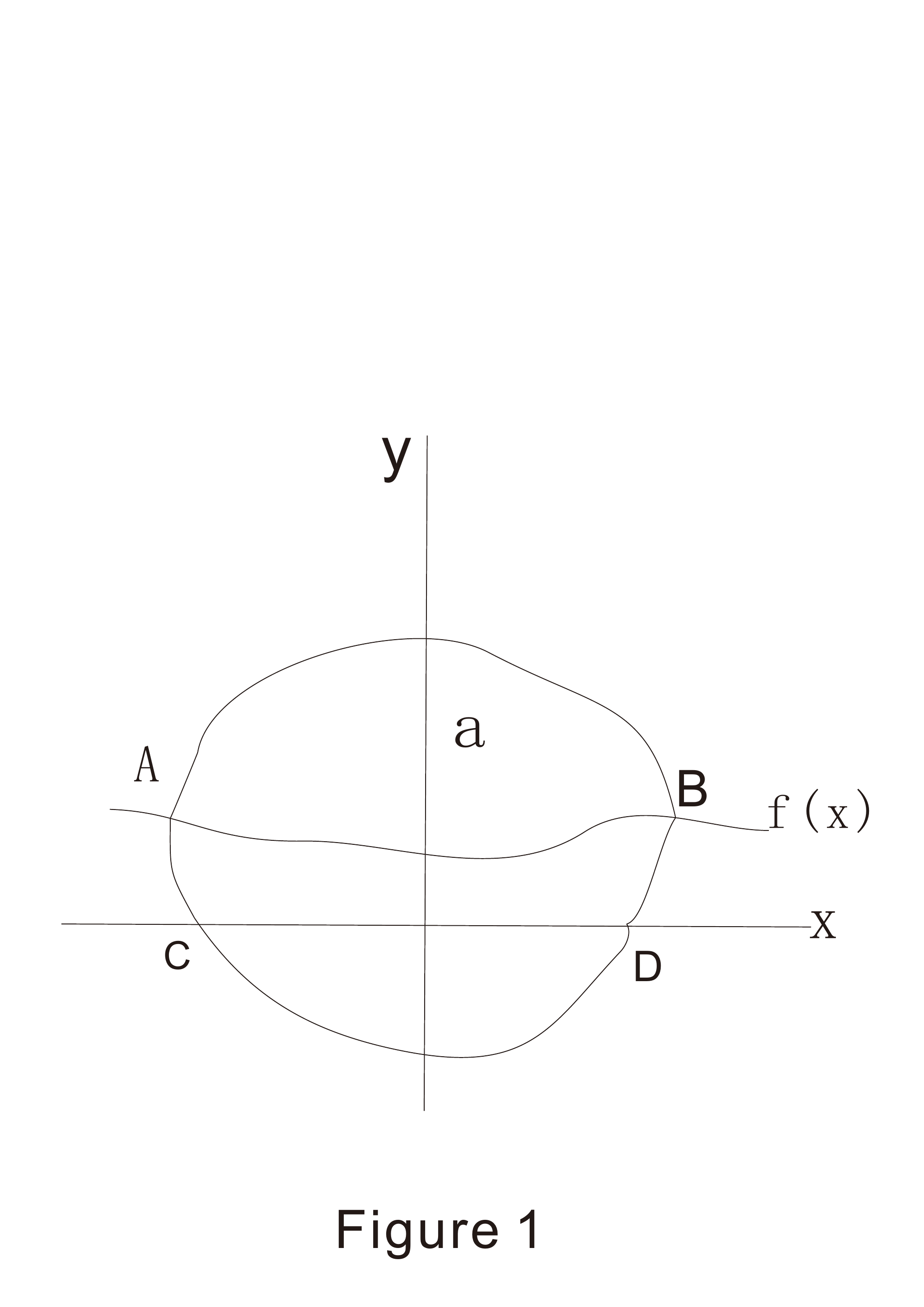}

Based on these notations, we have the following equivalent condition:
\begin{eqnarray*}
&&\left\{
\begin{aligned}
&x_3^2=x_4^2,&\\
&x_1^2+(f(x_1))^2=x_2^2+(f(x_2))^2,&\\
&x_1^2+(f(x_1-a))^2=x_2^2+(f(x_2)-a)^2.&
\end{aligned}
\right.\\
&\Leftrightarrow&
\left\{
\begin{aligned}
&x_3^2=x_4^2,&\\
&x_1^2=x_2^2,&\\
&f(x_1)=f(x_2).&
\end{aligned}
\right.
\end{eqnarray*}
Combining with the definition of periodic orbit of piecewise linear system, above condition is equivalent to
\begin{equation*}
\left\{
\begin{aligned}
&x_3=-x_4,&\\
&x_1=-x_2,&\\
&f(x_1)=f(x_2).&
\end{aligned}
\right.
\end{equation*}

From these two cases, we know that whether periodic orbit $C$ intersects $x$-axis or not, the conclusion of this lemma holds.
\end{proof}

\begin{example}
Similar to the previous Example \ref{ex1} below Lemma \ref{sepa1}, we can choose the same function $f(x)$, namely,
\begin{equation*}
f(x)=
\left\{
\begin{aligned}
\frac{1}{2} &,& x>\frac{3}{2}\\
\frac{1}{2}\sin(\pi x)+1&,&-\frac{3}{2}\leq x\leq\frac{3}{2}\\
\frac{3}{2}&,& x<-\frac{3}{2}
\end{aligned}
\right.
\end{equation*}
and $a=2$. Then there exists a periodic orbit $C$ which pass through $(1,1)$ and does not intersect with $x-$axis. Besides, from the conclusion of Lemma \ref{sepa2}, there are no other periodic orbits as there are no other pairs of symmetric points, then $C$ becomes the only limit cycle in the piecewise linear system.
\end{example}

\begin{remark}
The function $f(x)$ in the previous lemmas need not be defined on $\mathbf{R}$, they can be defined on a subset of $\mathbf{R}$.
\end{remark}

To prepare for the proof of the Theorem(A), we need another lemma. Also we give some terminologies firstly. Assume that $f(x)>0$ is a continuous function defined on $x\leq7$ and $f(7)=0$, then its graph $\{(x,f(x))|x\leq 7\}$ and the straight line $\{(x,0)|x\leq 7\}$ separate whole plane into two parts: the smaller part $I_1=\{(x,y)|0<y<f(x),x<7\}$ and the bigger part $I_2=\mathbf{R^2}\setminus I_1$. In the part $I_1$, the linear vector $Z_1$ is $\dot{x}=-(y-a)$, $\dot{y}=x$, where $a\geq f(0)>0$. In the part $I_2$, we set a linear vector $Z_2$ is $\dot{x}=-y$, $\dot{y}=x$. Consequently, we have a piecewise linear system $Z=(Z_1,Z_2)$. Now we will show the third lemma based on the above notation.
\begin{lemma}\label{thirdlemma}
In the above piecewise linear system $Z=(Z_1,Z_2)$, there are no periodic solutions which intersect the straight line $\{(x,0)|x\geq7\}$.
\end{lemma}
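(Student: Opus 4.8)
The plan is to argue by contradiction: suppose a periodic solution $C$ of $Z=(Z_1,Z_2)$ exists that meets the straight line $\{(x,0)\mid x\geq 7\}$. Since the two separation curves of $Z$ are the graph $\{(x,f(x))\mid x\leq 7\}$ and the ray $\{(x,0)\mid x\leq 7\}$ (together with the point $(7,0)$ where they join), any point of $C$ with $x\geq 7$ lies strictly inside the region $I_2$ where the vector field is $Z_2$, whose orbits are the circles $x^2+y^2=\text{const}$. The first thing I would record is that $C$ must actually enter $I_1$: otherwise $C$ is a single circle of $Z_2$, but $Z_2$'s circles are not isolated periodic orbits of the piecewise system (every nearby circle that avoids $I_1$ is also periodic), contradicting that $C$ is a limit cycle — or, if one only asks for a periodic \emph{orbit}, one still gets a contradiction because a full circle through a point with $x\geq 7$ large enough necessarily crosses into the half-plane $x<7$ and hence must interact with the boundary. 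So $C$ genuinely switches between $Z_1$ and $Z_2$, and in particular $C$ crosses the graph of $f$ at (at least) two sewing points $(x_1,f(x_1))$ and $(x_2,f(x_2))$ with $x_1\ne x_2$, both having $x_i<7$.

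Next I would invoke the computation already used in Lemmas~\ref{sepa1} and~\ref{sepa2}: along an arc of $C$ inside $I_1$, $H_1=x^2+(y-a)^2$ is constant, and along an arc inside $I_2$, $H_2=x^2+y^2$ is constant. Applying this to the two crossing points on the graph of $f$ gives $H_1(x_1,f(x_1))=H_1(x_2,f(x_2))$ and $H_2(x_1,f(x_1))=H_2(x_2,f(x_2))$, which (subtracting, exactly as in Lemma~\ref{sepa1}) forces $x_1=-x_2$ and $f(x_1)=f(x_2)$. In particular the two graph-crossing points are symmetric about the $y$-axis, so one of them, say $(x_2,f(x_2))$ with $x_2=-x_1>0$, and the arc of $C$ joining the graph to wherever $C$ reaches $x\geq 7$ lives in $I_2$ and is therefore a circular arc $x^2+y^2=r^2$ with $r^2=x_1^2+f(x_1)^2=x_2^2+f(x_2)^2$. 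For this circle to reach the line $y=0$ at some point with $x\geq 7$ we need $r\geq 7$, hence $x_i^2+f(x_i)^2\geq 49$, which already says the graph-crossings are far out.

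The heart of the argument — and the step I expect to be the main obstacle — is a topological/geometric localization showing this is impossible given that $f$ is defined only for $x\leq 7$ with $f(7)=0$: I want to show the circular arc in $I_2$ that starts at the graph point $(x_1,f(x_1))$ (with $x_1<0$, since $x_1^2\geq 49-f(x_1)^2$ and the point is on the graph so $x_1\leq 7$; I must rule out $x_1$ near $+7$ using that near $x=7$ one has $f$ small so the ``graph point'' is essentially on the axis) cannot travel around and hit the ray $x\geq 7$, $y=0$ without first re-crossing either the graph of $f$ or the ray $x\leq 7$, $y=0$, which would make it leave $I_2$ prematurely and contradict the arc being a single $Z_2$-arc. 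Concretely: the boundary of $I_2$ is a Jordan-type curve (graph of $f$ for $x\le7$, then the point $(7,0)$, then the ray $y=0$, $x\le7$), and $I_2$ is the complement of a ``strip'' opening to the left; a circle centered at the origin of radius $r\ge 7$ meets the boundary ray $\{y=0,\ x\le 7\}$ only at $(-r,0)$ (if $r>7$) and meets $\{y=0,\ x\ge7\}$ at $(r,0)$, so as one follows the circle from $(r,0)$ the arc stays in $y>0$ (or $y<0$) until it would reach $(-r,0)$, but along the way it must cross the graph of $f$ — because for $x$ between $-r$ and $7$ the circle has $|y|=\sqrt{r^2-x^2}$ while the graph boundary sits at $y=f(x)\ge 0$, and a careful comparison of these two curves (using $f(7)=0$, $f(0)\le a$, continuity and positivity of $f$) shows the circle's upper arc must intersect the graph on $x\in(-r,7)$. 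That forced intersection means $C$ enters $I_1$ before returning to the axis, so the putative arc of $C$ lying in $I_2$ and reaching $x\ge7$ cannot exist, contradiction. I would carry this out by splitting on whether $C$ meets the ray $x\ge7$ in the upper half-plane, lower half-plane, or at $y=0$, and in each case exhibiting the unavoidable earlier crossing of $\partial I_2$; the bookkeeping of which boundary piece is hit first, rather than any hard estimate, is the delicate part.
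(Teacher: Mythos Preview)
Your proposal takes a substantially harder route than the paper and, as written, has a genuine gap. You assume that $C$ meets the graph of $f$ at two points and then invoke the computation of Lemmas~\ref{sepa1}--\ref{sepa2} to force those two graph crossings to be symmetric about the $y$-axis. But that computation requires the two crossing points to be joined \emph{both} by a $Z_1$-arc (inside $I_1$) and by a $Z_2$-arc (inside $I_2$). In the present geometry the region $I_1$ is bounded not only by the graph of $f$ but also by the ray $\{(x,0):x\le 7\}$, so an $I_1$-arc starting at a graph point may exit $I_1$ through the $x$-axis rather than through the graph; two consecutive graph crossings of $C$ therefore need not lie on a common $Z_1$-orbit, and the $H_1$-equality you use is unjustified. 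The ensuing topological ``localization'' argument, which you yourself flag as the delicate step, is only sketched, and nothing in it rules out, for instance, a periodic orbit with several alternating graph/axis crossings.

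The paper's argument avoids all of this by exploiting a feature you do not use: the entire lower half-plane $\{y<0\}$ lies in $I_2$. Hence if $C$ passes through $(x_1,0)$ with $x_1\ge 7$, the portion of $C$ in $\{y\le 0\}$ is a single $Z_2$-half-circle and lands at $(-x_1,0)$, which sits on the $x$-axis boundary of $I_1$. From there one $Z_1$-arc inside $I_1$ reaches a graph point $(x_2,f(x_2))$, giving $x_1^2+a^2=x_2^2+(f(x_2)-a)^2$, while the $Z_2$-arc closing $C$ back to $(x_1,0)$ gives $x_1^2=x_2^2+f(x_2)^2$. Subtracting yields $2af(x_2)=0$, contradicting $a>0$ and $f(x_2)>0$. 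No topological bookkeeping is needed; the right pair of boundary points to compare is an $x$-axis point and a graph point, not two graph points.
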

\begin{proof}
We shall prove this lemma by contradiction. Suppose that there exists a periodic solution $C$ that passes through $(x_1,0)$ where $x_1\geq7$, then it must pass through $(-x_1,0)$ by symmetry of linear system $Z_2$. Also assume that this periodic solution $C$ intersects the graph of $f(x)$ at $(x_2,f(x_2))$.\\

Then we have the following formulas
\begin{eqnarray*}
&&\left\{
\begin{aligned}
&x_1^2=x_2^2+(f(x_2))^2,&\\
&x_1^2+a^2=x_2^2+(f(x_2)-a)^2.&
\end{aligned}
\right.
\end{eqnarray*}
which induce $a=0$. This is a contradiction. Then we complete the proof.
\end{proof}

\section{Proof of Theorem (A)}
Now, based on these preparation, we can construct a piecewise linear system to realize a special configuration $\mathcal{C}=\{C_0, C_1,\cdots, C_n\}$, where $C_0$ is a limit cycle containing $C_1$, $C_2$, $\cdots$, $C_n$, and $C_1$, $C_2$, $\cdots$, $C_n$ are all limit cycles which satisfy a condition that each of them does not contain one another. For convenience, we will say that $C_1$, $C_2$, $\cdots$, $C_n$ satisfy coordination condition or $C_1$, $C_2$, $\cdots$, $C_n$ are coordinated.\\

We put the method of construction into the following theorem.
\begin{theorem}\label{coordi1}
For a configuration $\mathcal{C}=\{C_0, C_1,\cdots,C_n\}$ of limit cycles in which $C_0$ contains $C_1$, $C_2$, $\cdots$, $C_n$ and $C_1$, $C_2$, $\cdots$, $C_n$ are coordinated ,there exists a piecewise linear system which realizes it.
\end{theorem}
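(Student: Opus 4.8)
\noindent\emph{Plan of proof.} The idea is to use the freedom of topological equivalence to replace $\mathcal C$ by a very explicit model, and then to build the separating curve as a single long continuous arc carrying $n+1$ local ``modules'', each a suitably placed copy of the mechanism of Lemma~\ref{sepa1} (made explicit in Example~\ref{ex1}) and of Lemma~\ref{sepa2}, glued together so that Lemma~\ref{thirdlemma} forbids any interaction between distinct modules and any escape of periodic orbits to infinity.

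First I would fix the model. Since $C_{0}$ encloses $C_{1},\dots ,C_{n}$ and no $C_{i}$ encloses another, the equivalence class of $\mathcal C$ is the same as that of the configuration formed by one large Jordan curve $C_{0}$ together with $n$ pairwise disjoint small loops sitting side by side in its interior; hence it suffices to realize this model. I would then keep on the two zones two linear centers as in Lemma~\ref{sepa1} (the field $\dot x=-y,\ \dot y=x$ on one zone, and $\dot x=-(y-a),\ \dot y=x$ with $a$ large on the other), and take the common boundary $\Gamma$ to be a single continuous curve dividing $\mathbf R^{2}$ into exactly two regions. Largeness of $a$ guarantees, exactly as in Example~\ref{ex1}, that no entire circle of either field lies inside its own zone, so every periodic orbit must meet $\Gamma$; by Lemma~\ref{sepa1} (and Lemma~\ref{sepa2} if a module also uses an auxiliary line) it then meets $\Gamma$ only at pairs of points that are symmetric with respect to the line through the two centers and lie at the same height. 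Controlling the limit cycles thus reduces to controlling the symmetric matchings $f(x_{1})=f(x_{2})$, $x_{1}=-x_{2}$, along $\Gamma$. Along $n$ pairwise disjoint pieces of $\Gamma$, placed far from one another, I would insert $n$ local bumps, each shaped as a scaled--translated copy of the profile of Example~\ref{ex1}, so that the $k$-th bump produces exactly one limit cycle $C_{k}$, localized near that bump, with $C_{k}$ crossing $\Gamma$ transversally at sewing points; because the bumps are far apart the loops $C_{1},\dots ,C_{n}$ are then pairwise non-nested, i.e.\ coordinated. A further large-scale feature of $\Gamma$ produces one big limit cycle $C_{0}$ enclosing all the $C_{k}$. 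Between consecutive bumps I would let $\Gamma$ travel in the hairpin configuration of Lemma~\ref{thirdlemma}, so that by that lemma no periodic orbit crosses from one bump's region to another's and none escapes to infinity; combining this with Lemma~\ref{sepa1} and the slope computation of Example~\ref{ex1} carried out at each bump, one checks that every intended loop closes up with the correct orientation and is isolated, and that no other symmetric matching along $\Gamma$ can close up. Consequently the set of limit cycles of the resulting two-zone piecewise linear system is exactly $\{C_{0},C_{1},\dots ,C_{n}\}$ with the prescribed nesting, so the system realizes $\mathcal C$.

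The main obstacle is the bookkeeping in the last step: assembling all the modules into one continuous curve $\Gamma$ bounding only two zones while simultaneously ruling out every spurious symmetric matching produced by the concatenation (in particular, ruling out periodic orbits that cross $\Gamma$ more than twice, and any extra isolated loop), and making sure the $n$ inner cycles come out genuinely coordinated rather than nested --- which is exactly why the modules must be placed over widely separated pieces of $\Gamma$ rather than stacked together. It is here that Lemma~\ref{thirdlemma} does the essential work, since it permits the modules to be analyzed one at a time; the remaining steps are the routine but careful verifications already illustrated in Example~\ref{ex1}.
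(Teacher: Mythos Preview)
Your high-level strategy matches the paper's: two linear centers on the $y$-axis, a single separating curve $\Gamma$, and local ``modules'' (the paper calls them \emph{teeth}) each manufacturing one limit cycle via the symmetric-matching criterion of Lemma~\ref{sepa1}. But the step where you obtain the coordination of $C_1,\dots,C_n$ has a real gap.

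You write that the $n$ bumps are ``placed far from one another'' and that ``because the bumps are far apart the loops $C_1,\dots,C_n$ are then pairwise non-nested.'' With the two centers fixed at $(0,0)$ and $(0,a)$, Lemma~\ref{sepa1} forces every crossing pair to be symmetric about the $y$-axis; hence each module must itself straddle the $y$-axis. You therefore cannot spread the modules horizontally, and ``translated copies of Example~\ref{ex1}'' only make sense as \emph{vertical} translates. But for vertically stacked modules, ``far apart'' does not by itself prevent nesting: each $C_k$ is a union of two circular arcs about $(0,0)$ and $(0,a)$, and the radii vary monotonically in opposite directions with the height, so without further structure the arcs interleave. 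In the paper the coordination is obtained not by distance but by the global shape of $\Gamma$: the separating curve winds back and forth (vertical segments $P_1A_1,\,A_2B_1,\,B_2A_3,\dots$ and horizontal bridges $B_1B_2,\,B_3B_4$) so as to create disjoint pockets, one per tooth, and each $C_k$ is trapped in its own pocket. This ``Key Curve'' structure is the missing ingredient in your outline.

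Relatedly, you invoke Lemma~\ref{thirdlemma} as the tool that isolates one module from another. In the paper Lemma~\ref{thirdlemma} plays a different and narrower role: it only rules out periodic orbits crossing the half-line $\{(x,0):x\ge 7\}$ at the tip of the key, i.e.\ orbits escaping around the end of the construction. The isolation between teeth is handled instead by Lemma~\ref{sepa2} together with the explicit winding of $\Gamma$, and the uniqueness on each tooth by choosing $f_i$ so that its graph has exactly one $y$-axis-symmetric pair. Once you replace ``far apart bumps joined by hairpins'' with ``teeth stacked along the $y$-axis and separated by the serpentine connectors of the Key Curve,'' your argument becomes the paper's.
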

\begin{proof}
Without loss of generality, we assume $n=3$ since the method of construction is similar.\\

For the configuration $\mathcal{C}=\{C_1, C_2,\cdots, C_n\}$ in the condition of the theorem, we shall use Figure 2 to construct the piecewise linear system.\\

\includegraphics[width=0.53\textwidth,angle=0]{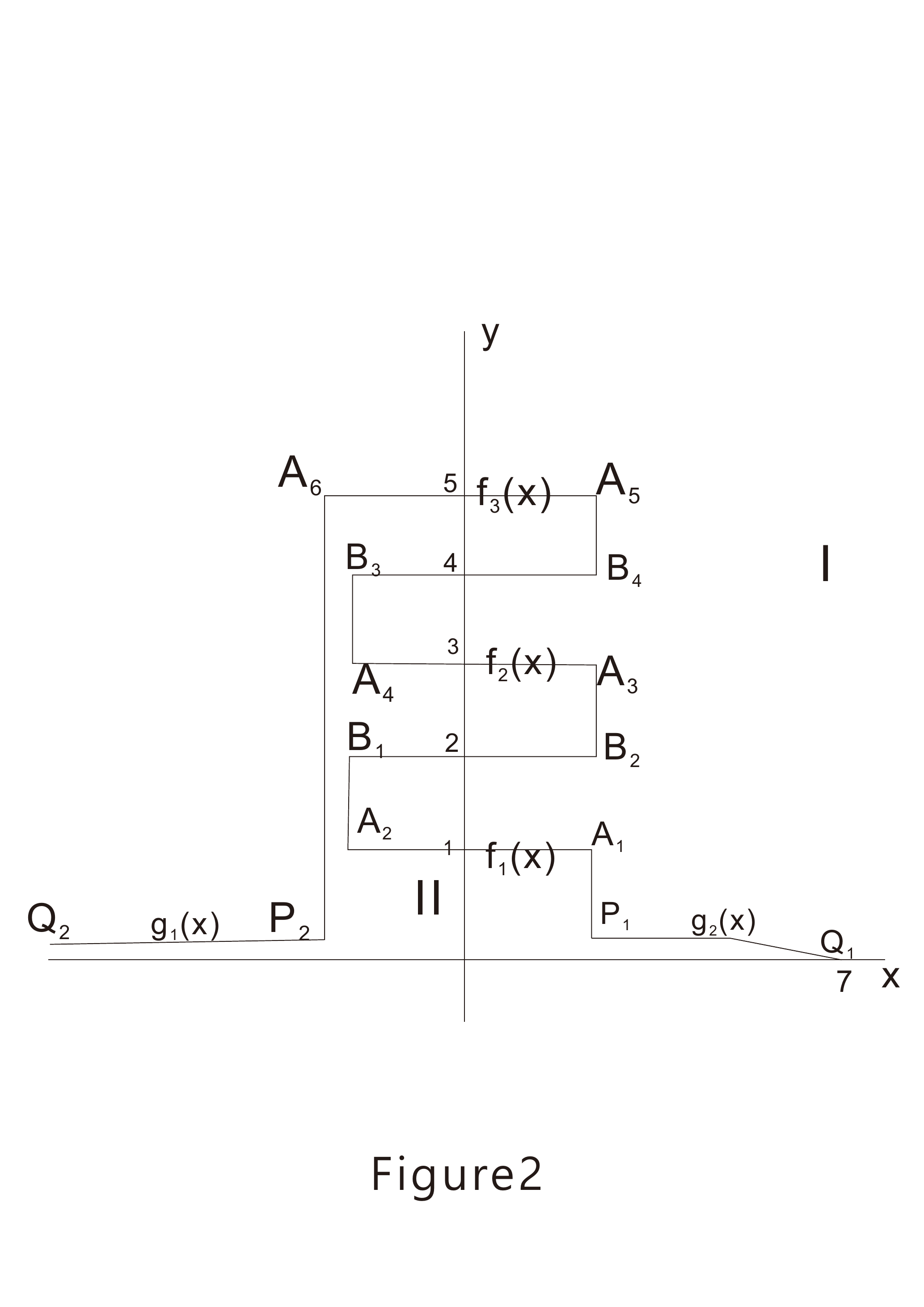}

In this figure, $Q_1$ is a fixed point at $(7,0)$; $P_1A_1$, $A_2B_1$, $B_2A_3$, $A_4B_3$, $B_4A_5$ and $A_6P_2$ are vertical; $B_1B_2$, $B_3B_4$ are straight lines perpendicular to $y-$axis; and $A_2A_1$, $A_4A_3$, $A_6A_5$, $Q_2P_2$ and $P_1Q_1$ are the graphs of functions $f_1(x)$, $f_2(x)$, $f_3(x)$, $g_1(x)$ and $g_2(x)$ respectively. Here we choose these functions as follows:
\begin{eqnarray*}
f_1(x)&=&\frac{1}{4}\sin(2\pi x)+1, x\in [-\frac{3}{4},1],\\
f_2(x)&=&\frac{1}{4}\sin(2\pi x)+3, x\in [-\frac{3}{4},1],\\
f_3(x)&=&\frac{1}{4}\sin(\frac{3}{2}\pi x)+5, x\in [-1,1],\\
g_1(x)&=&\frac{1}{32}\sin\pi(x+\frac{13}{2})+\frac{1}{16}, x\leq -1,
\end{eqnarray*}
and
\begin{eqnarray*}
g_2(x)&=&
\left\{
\begin{aligned}
&\frac{1}{8},& 1\leq x \leq 6,\\
&-\frac{1}{8}(x-7),& 6\leq x \leq 7.
\end{aligned}
\right.
\end{eqnarray*}
Now the continuous curve $Q_2P_2A_6A_5B_4B_3A_4A_3B_2B_1A_2A_1P_1Q_1$ and the subset of $x-$axis $\{x,0|x\leq 7\}$ together form the separation line, which divides the whole plane $\mathbf{R}^2$ into two connected domains $I, II$. In the domain $I$, we set the system $Y_1:\dot{x}=-y$, $\dot{y}=x$; In the domain $II$, we set $Y_2:\dot{x}=-(y-5)$, $\dot{y}=x$, then the piecewise linear system $Y=(Y_1,Y_2)$ exists. \\

From the expression of $f_1(x)$ we know that only $(\frac{1}{2},1)$ and $(-\frac{1}{2},1)$ are symmetric with respect to $y-$axis in the graph of $f_1(x)$. And the expression of $f_1(x)$ guarantees the existence a periodic orbit $C_1$ which passes through $(\frac{1}{2},1)$. So this periodic orbit $C_1$ must be a limit cycle.\\

Similarly, by direct computation and the condition of Lemma \ref{sepa1} and \ref{sepa2}, there exist only two more limit cycles $C_2$, $C_3$ which intersect graphs of $f_2(x)$ and $f_3(x)$ at $(\frac{1}{2},3)$ and $(\frac{2}{3},5)$ respectively. Besides, there is another limit cycle $C_0$ which passes through the graph of functions $g_1(x)$ and $g_2(x)$ at $(-\frac{13}{2},\frac{1}{16})$ and $(\frac{13}{2},\frac{1}{16})$ respectively. And $(-\frac{13}{2},\frac{1}{16})$ and $(\frac{13}{2},\frac{1}{16})$ are only pair of points which are symmetric with respect to $y-$axis and in the graphs of $g_1(x)$ and $g_2(x)$, so there are no other limit cycles which intersect graphs of $g_1(x)$ and $g_2(x)$. By similar method to that in Lemma \ref{thirdlemma}, there are also no other limit cycles which intersect the graph of $g_1(x)$ and the straight line $\{(x,0)|x>7\}$.\\

In summary, there are only four limit cycles $C_0$, $C_1$, $C_2$ and $C_3$ in this piecewise linear system $Y$. $C_0$ passes through $(\frac{13}{2},\frac{1}{16})$, and $C_1$, $C_2$ and $C_3$ pass through $(\frac{1}{2},1)$, $(\frac{1}{2},3)$ and $(\frac{2}{3},5)$, respectively. Thus $C_0$ contains $C_1$, $C_2$ and $C_3$, meanwhile $C_1$, $C_2$, $C_3$ are coordinated. Then this theorem has been proved.
\end{proof}

The configuration in Theorem \ref{coordi1} is very special, there exists more complicated configuration $\mathcal{C}=\{C_{01},\cdots, C_{0k_0},C_{11}, \cdots, C_{1k_1},\cdots, C_{n1}, \cdots, C_{nk_n}\}$, more exactly, $C_{0j}$ is contained in $C_{0(j-1)}$, $(j=2,\cdots, k_0)$, $C_{0k_0}$ contains $C_{11}$, $C_{21}$, $\cdots$, $C_{n1}$ and $C_{11}$,$C_{21}$, $\cdots$, $C_{n1}$ are coordinated, and $C_{ij}$ is contained in $C_{i(j-1)}$, $(i=1,\cdots, n,j=2,\cdots, k_n)$. We shall construct a piecewise linear system with two zones to realize this kind of configuration. For convenience, this kind of configuration is called deep coordinated configuration of limit cycles.

\begin{corollary}\label{corollary}
For any given deep coordinated configuration of limit cycles, there exists a piecewise linear system to realize it.
\end{corollary}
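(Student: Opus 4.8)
The plan is to reduce the general \emph{deep coordinated configuration} to the situation already handled in Theorem~\ref{coordi1} by replacing each single limit cycle of that construction with a nested stack of limit cycles lying in a thin annular ``channel'', produced by exactly the mechanism of Lemma~\ref{sepa1} and Lemma~\ref{sepa2}. Recall that in Theorem~\ref{coordi1} each of the cycles $C_1,\dots,C_n$ was forced to exist by prescribing, on a small arc of the separation curve, a graph $y=f_i(x)$ whose \emph{only} pair of points symmetric in the $y$-axis realizes one crossing point of one cycle. The key observation is that a graph with $k$ such symmetric pairs — for instance $f(x)=\varepsilon\sin\big(\tfrac{(2k-1)\pi}{\ell}\,x\big)+c$ on a suitable interval, with $\varepsilon$ small so the graph stays inside the prescribed channel — forces exactly $k$ nested periodic orbits through those pairs, and by Lemma~\ref{sepa1}/\ref{sepa2} \emph{no} others. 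Choosing the $k$ crossing heights so that the corresponding concentric circular arcs of the inner field $Y_2$ are strictly nested then yields $k$ properly nested limit cycles in place of one.

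Concretely, I would proceed as follows. First, fix the combinatorial datum: a root chain $C_{01}\supset C_{02}\supset\cdots\supset C_{0k_0}$, and inside $C_{0k_0}$ a coordinated family of chains $C_{i1}\supset\cdots\supset C_{ik_i}$, $i=1,\dots,n$. Second, take the Figure~2 construction of Theorem~\ref{coordi1} as a template: it already produces one outer cycle $C_0$ and $n$ coordinated cycles $C_1,\dots,C_n$ in a discontinuous PWL system with two zones, the inner zone carrying the virtual center $Y_2$. Third, ``inflate'' each template cycle. For the $i$-th coordinated slot, replace the single bump $f_i$ by a function $\tilde f_i$ with $k_i$ symmetric pairs, located at heights $h_{i,1}>h_{i,2}>\cdots>h_{i,k_i}$ chosen close together and arranged so the arcs $x^2+(y-5)^2=\text{const}$ through the pairs are strictly nested (this is automatic once the $k_i$ crossing points all sit on distinct level curves of $H_2$ of the inner center, which one checks by a direct but routine slope/level computation exactly as in the examples after Lemmas~\ref{sepa1},\ref{sepa2}); the amplitude of $\tilde f_i$ is taken small enough that its graph remains inside the narrow vertical channel $A_{2i}B_{2i-1}$–type strip allotted to that slot, so the channels of different slots stay disjoint and the nesting/coordination pattern of the template is preserved. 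Fourth, do the same inflation to the outer slot governed by $g_1,g_2$: replace them by graphs carrying $k_0$ symmetric pairs so that $C_0$ becomes a nested chain $C_{01}\supset\cdots\supset C_{0k_0}$, using Lemma~\ref{thirdlemma} to rule out spurious periodic orbits that would cross the half-line $\{(x,0):x>7\}$.

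Fifth, verify there are no extra limit cycles: any periodic orbit must cross the separation curve at sewing points, hence must cross one of the graphs $\tilde f_i$ (or the $g$-graphs), and Lemma~\ref{sepa1}/\ref{sepa2} pins its crossing points to one of the finitely many prescribed symmetric pairs; counting gives exactly $k_0+\sum_{i=1}^n k_i$ cycles with the required nesting. Since homeomorphisms of $\mathbf{R}^2$ can rescale and reposition channels freely, the configuration realized is topologically equivalent to the given deep coordinated $\mathcal{C}$, which is what is claimed. I expect the main obstacle to be book-keeping rather than ideas: one must choose the finitely many crossing heights, the channel widths, and the amplitudes $\varepsilon_i$ simultaneously so that (a) each inflated graph stays inside its own channel, (b) all $k_i$ level curves of the relevant center through the chosen pairs are strictly nested and (c) no unintended symmetric pair is created and no orbit escapes through the unbounded part of the boundary; making these finitely many inequalities mutually compatible is the delicate — though elementary — part of the argument, and I would isolate it as a short explicit lemma before assembling the pieces as above.
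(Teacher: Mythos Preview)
Your proposal is correct and follows essentially the same approach as the paper: starting from the Key Curve template of Theorem~\ref{coordi1}, you replace each single-bump graph $f_i$ by a higher-frequency sine so that it carries $k_i$ pairs of $y$-axis-symmetric points instead of one, and then invoke Lemmas~\ref{sepa1}--\ref{sepa2} (and Lemma~\ref{thirdlemma} for the outer slot) to conclude that exactly the prescribed nested stacks of limit cycles appear and no others. The paper's proof does this concretely for the sample case $n=3$, $k_0=1$, $k_1=k_2=2$, $k_3=1$ by taking $f_1(x)=\tfrac14\sin(3\pi x)+1$ and $f_2(x)=\tfrac14\sin(3\pi x)+3$ on $[-\tfrac34,1]$ while leaving $f_3,g_1,g_2$ unchanged, and then appeals to ``the same method'' for the general deep coordinated configuration.
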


\begin{proof}
Without loss of generality, we can assume that $n=3$, $k_0=1$, $k_1=k_2=2$, $k_3=1$. We also use the separation line in Theorem \ref{coordi1} shown in Figure 2.\\

The work remained to us is to choose special functions $f_1(x)$ and $f_2(x)$. $f_3(x), g_1(x)$ and $g_2(x)$ are the same as that in Theorem \ref{coordi1}.\\

Let
\begin{eqnarray*}
f_1(x)=\frac{1}{4}\sin (3\pi x)+1, x\in[-\frac{3}{4},1]\\
f_2(x)=\frac{1}{4}\sin (3\pi x)+3, x\in[-\frac{3}{4},1]
\end{eqnarray*}
Then there are only two pairs of points in graph of $f_1(x)$: ${(\frac{1}{3},1),(-\frac{1}{3},1)}$ and ${(\frac{2}{3},1),(-\frac{2}{3},1)}$ which are symmetry with respect to $y-$axis. And the limit cycles $C_{11}$($C_{12}$, resp) which passes through $(\frac{1}{3},1)$ ($(\frac{2}{3},1)$, resp) exists. So based on the conclusion of Lemma \ref{sepa1}, there are no other limit cycles which intersect the graph of $f_1(x)$.\\

Similarly, there exist only two limit cycles $C_{21}$ and $C_{22}$ which intersect the graph of $f_2(x)$ at $(\frac{1}{3},3)$, $(\frac{2}{3},3)$ respectively.\\

Recall the conclusion in the proof of Theorem \ref{coordi1}, there are also limit cycles $C_0$, $C_3$ and these limit cycles satisfy that $C_0$ contains $C_{11}$, $C_{12}$, $C_{21}$, $C_{22}$, $C_{3}$ and $C_{11}$, $C_{21}$, $C_{3}$ are coordinated, $C_{i2}$ is contained in ${C_{i1}}$ $(i=1,2)$. Thus we construct a piecewise linear system to realize the deep coordinated configuration for $n=3, k_0=1, k_1=k_2=2, k_3=1$.\\

Using the same method, we can obtain a piecewise linear system to realize any given deep coordinated configuration of finite limit cycles.
\end{proof}

Now, we can only realize deep coordinated configuration of limit cycles. More energy should be spent to prove Theorem (A). From now on, we shall focus on the construction of piecewise linear system to realize more general given configuration of limit cycles.

\begin{theorem}\label{thm2}
For a configuration $\mathcal{C}=\{C_0,C_1,C_2,C_3,C_{11},C_{12}\}$ in which $C_{0}$ contains coordinated $C_1$, $C_2$, $C_3$, and $C_{11}$, $C_{12}$ are coordinated limit cycles that are both contained in $C_{1}$. (see Figure 3). Then there exists a piecewise linear system to realize it.
\end{theorem}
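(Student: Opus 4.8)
The plan is to keep exactly the two‑zone architecture of Theorem~\ref{coordi1} --- a continuous separation curve assembled from graphs of functions of the form $\tfrac{c}{4}\sin(k\pi x)+d$ (``bumps'') joined by vertical and horizontal segments, together with the $g_1,g_2$ tail and the ray $\{(x,0):x\le 7\}$, splitting the plane into a zone carrying $\dot x=-y,\ \dot y=x$ and a zone carrying $\dot x=-(y-a),\ \dot y=x$ --- but to route the separation curve through more bumps so that the enclosed limit cycles acquire the required two‑level branching. The guiding observation is that, by Lemmas~\ref{sepa1} and~\ref{sepa2}, a limit cycle meets the separation curve only in a pair $(\pm x_0,y_0)$ symmetric about the $y$‑axis, and such a cycle is the ``lens'' bounded by the arc through $(\pm x_0,y_0)$ of the circle about $(0,0)$ and the arc through $(\pm x_0,y_0)$ of the circle about $(0,a)$; the vertical extent of this lens grows with $|x_0|$. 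Hence a bump at height $h$ crossed at a \emph{large} abscissa $\pm x_0$ produces a comparatively large lens, while bumps at heights slightly above and below $h$ crossed at \emph{small} abscissae produce two tiny lenses lying inside the large one and, being confined to disjoint horizontal strips, disjoint and mutually non‑nesting. In short, $C_1$ is to play, at a smaller scale and nested inside the genuine $C_0$, the role that $C_0$ plays in Theorem~\ref{coordi1}.

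Concretely I would fix $a=5$ and, besides the $g_1,g_2$ tail producing $C_0$ exactly as in Theorem~\ref{coordi1}, use five bumps: one wide bump near height $1.2$, of the form $\tfrac{c}{4}\sin(\tfrac{\pi}{2}x)+1.2$ over an asymmetric interval such as $[-3,4]$, whose only symmetric pair is $(\pm 2,1.2)$ and which therefore yields a single cycle $C_1$; two narrow bumps near heights $1$ and $1.5$, each of the form $\tfrac{c}{4}\sin(2\pi x)+d$ over $[-\tfrac34,1]$ with very small amplitude $c$, with symmetric pairs $(\pm\tfrac12,1)$ and $(\pm\tfrac12,1.5)$, yielding $C_{11}$ and $C_{12}$; and two narrow bumps near heights $3$ and $5$ yielding $C_2$ and $C_3$ as in Theorem~\ref{coordi1}. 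One then carries out three steps. \emph{(i) Enumeration of periodic orbits:} applying Lemmas~\ref{sepa1},~\ref{sepa2} and~\ref{thirdlemma} exactly as in the proof of Theorem~\ref{coordi1}, every periodic orbit meets the separation curve in a symmetric pair, the symmetric pairs on a bump are the finitely many points where the sine vanishes inside its interval, the amplitudes are chosen so that all pieces lie in pairwise disjoint horizontal strips (so two points at the same height cannot lie on two different pieces), and the only remaining pair is the $C_0$ pair on $g_1\cup g_2$; hence the system has exactly the six limit cycles $C_0,C_1,C_2,C_3,C_{11},C_{12}$, each isolated. \emph{(ii) Verification of the nesting:} a direct computation of the lens extents shows that the lens of $C_1$ (crossing at $\pm 2$, height $\approx 1.2$) reaches from about $y=0.7$ to about $y=2.3$, so it swallows the thin lenses of $C_{11}$ (near $y=1$) and of $C_{12}$ (near $y=1.5$), which lie in disjoint strips and hence are disjoint and neither contained in the other; while $C_1,C_2,C_3$ occupy disjoint strips (topping out near $y\approx 2.3,\ 3.04,\ 5.04$), all inside the lens of $C_0$ (from about $y=-3.2$ to about $y=6.5$). \emph{(iii)} This is precisely the configuration $\mathcal{C}=\{C_0,C_1,C_2,C_3,C_{11},C_{12}\}$, so the constructed system realizes it.

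The main obstacle is that everything must be done with only the two fixed centres $(0,0)$ and $(0,a)$ available for the whole plane: the same two linear fields must simultaneously produce the containing cycle $C_1$ \emph{and} the coordinated pair $C_{11},C_{12}$ nested inside it, so one cannot equip the inner gadget with its own centre. This forces a careful calibration of the crossing abscissa and the height of the wide $C_1$‑bump \emph{relative to} $a$, so that the two arcs forming $C_1$ bulge far enough up (about $(0,0)$) and down (about $(0,a)$) to enclose the two small lenses yet not far enough to reach $C_2$; and it forces the amplitudes and the routing of the separation curve --- in particular the vertical and horizontal connectors taking the snake from the narrow bump below $C_1$ out to the wide $C_1$‑bump and back in to the narrow bump above it --- to be arranged so that the curve stays simple and no extra symmetric pair is created (none straddling the $y$‑axis on a connector, and none formed between points on two distinct pieces). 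Once this bookkeeping is in place, the same argument with more bumps --- iterating ``small lens inside wide lens'' and adjoining disjoint copies for the forest structure --- extends to the configurations needed for Theorem~(A).
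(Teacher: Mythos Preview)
Your plan is correct and is essentially the paper's own argument: both replace the first tooth of the Key Curve by a miniature Key Curve whose outer piece produces $C_1$ and whose two inner teeth produce the coordinated pair $C_{11},C_{12}$, then enumerate periodic orbits via Lemmas~\ref{sepa1}--\ref{thirdlemma} exactly as in Theorem~\ref{coordi1}. The only difference is a scaling choice --- the paper keeps $C_1$ at abscissa $\pm\tfrac12$ (splitting the old $f_1$ into $h_1,h_2$) and shrinks $C_{11},C_{12}$ to abscissa $\pm\tfrac{1}{32}$ at heights $\tfrac{97}{96},\tfrac{99}{96}$ inside the gap $|x|\le\tfrac{1}{16}$, so that $g_1,g_2,f_2,f_3$ and all connectors carry over verbatim from Theorem~\ref{coordi1}, whereas your enlargement of $C_1$ to abscissa $\pm 2$ forces you to relocate $g_2$, raise the horizontal connector $B_1B_2$ above $y\approx 2.33$, and reroute several vertical segments to keep the $C_1$-lens from meeting them; this is precisely the routing bookkeeping you already flag, and once it is done the two proofs coincide.
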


\includegraphics[width=0.4\textwidth,angle=0]{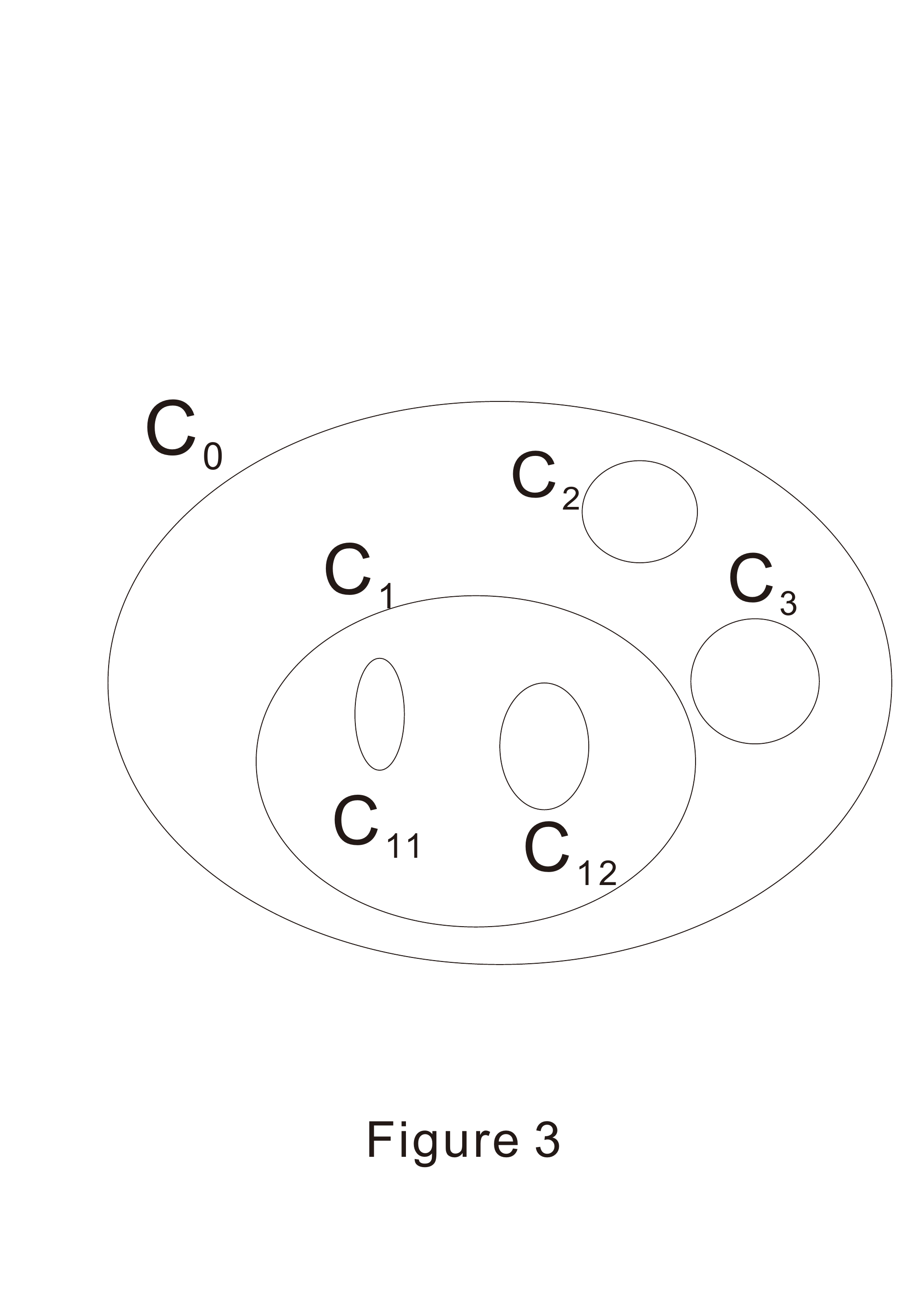}
\includegraphics[width=0.6\textwidth,angle=0]{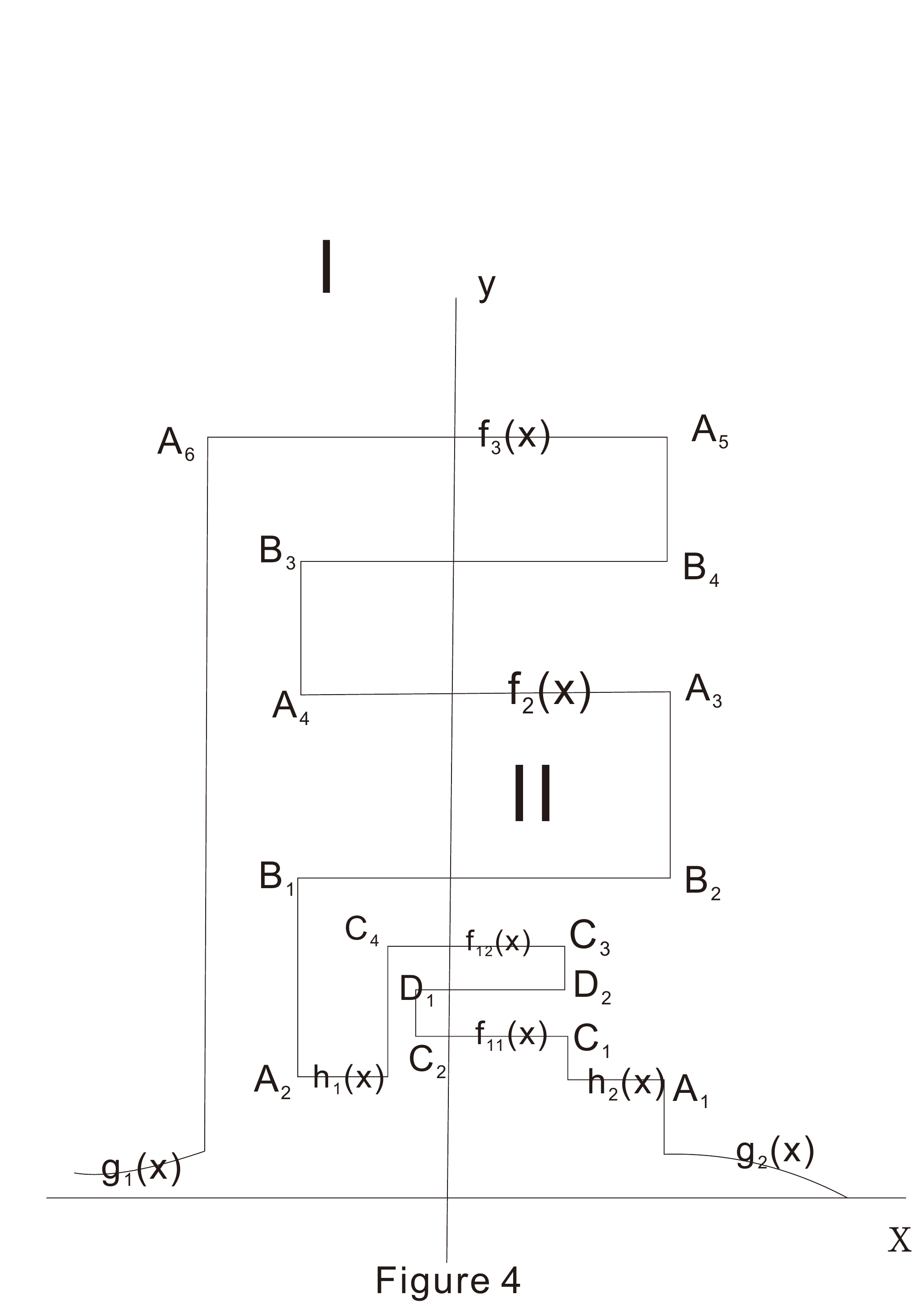}
\begin{proof}
To realize this configuration, we will modify the curve in Theorem \ref{coordi1} to get a new one.\\

Approximately, our separation line resembles the curve in the above figure. (See Figure 4)\\

In this figure, all vertical lines are straight, so are three horizontal lines $B_1B_2$, $B_3B_4$, $D_1D_2$ which pass through $(0,2)$, $(0,4)$ and $(0,\frac{98}{96})$ respectively. And other lines are the graphs of corresponding functions. Here $g_1(x)$, $g_2(x)$, $f_2(x)$, $f_3(x)$ are the same as those in Theorem \ref{coordi1}, namely,
\begin{equation*}
f_2(x)=\frac{1}{4}\sin(2\pi x)+3, x\in [-\frac{3}{4},1],
\end{equation*}
\begin{equation*}
f_3(x)=\frac{1}{4}\sin(\frac{3}{2}\pi x)+5, x\in [-1,1],
\end{equation*}
\begin{equation*}
g_1(x)=\frac{1}{32}\sin\pi(x+\frac{13}{2})+\frac{1}{16}, x\leq -1,
\end{equation*}
and
\begin{equation*}
g_2(x)=
\left\{
\begin{aligned}
&\frac{1}{8},& 1\leq x \leq 6,\\
&-\frac{1}{8}(x-7),& 6\leq x \leq 7.
\end{aligned}
\right.
\end{equation*}

Now we choose $f_{11}(x)$, $f_{12}(x)$ and $h_1(x)$, $h_2(x)$ as follows:
\begin{eqnarray*}
f_{11}=\frac{1}{192}\sin (32\pi x)+\frac{97}{96}&,& x\in [-\frac{1}{16},\frac{1}{16}]\\
f_{12}=\frac{1}{192}\sin (32\pi x)+\frac{99}{96}&,& x\in [-\frac{3}{64},\frac{1}{16}]\\
h_1(x)=\frac{1}{4}\sin(2\pi x)+1&,& x\in [-\frac{3}{4},-\frac{1}{16}]\\
h_2(x)=\frac{1}{4}\sin(2\pi x)+1&,& x\in [\frac{1}{16},1]
\end{eqnarray*}
Then this curve and the straight line $\{x,0|x\leq 7\}$ as a separation line divide $\mathbf{R}^2$ into two parts: the outer part $I$ and the inner part $II$.\\

In part $I$, we set $\dot{x}=-y,\dot{y}=x$, and in domain $II$, we have $\dot{x}=-(y-5),\dot{y}=x$. Thus we have a piecewise linear system. Then we shall show that this system can realize the configuration $\mathcal{C}=\{C_0,C_1,C_2,C_3,C_{11},C_{12}\}$ in the condition.\\

From the expression of $f_{11}(x)$, we know that only $(\frac{1}{32},\frac{97}{96})$ and $(\frac{1}{32},\frac{97}{96})$ at graph of $f_{11}(x)$ are symmetric with to $y-$axis. Similarly, only $(\frac{1}{32},\frac{99}{96})$ and $(\frac{1}{32},\frac{99}{96})$ at graph of $f_{12}(x)$ are symmetric with to $y-$axis. By direct computation, we know that there exist two limit cycles which pass through $(\frac{1}{32},\frac{97}{96})$ and $(\frac{1}{32},\frac{99}{96})$ respectively. And we call them limit cycles $C_{11}$ and $C_{12}$ respectively. Except for $C_{11}$ and $C_{12}$, there are no other limit cycles which intersect the graphs of $f_{11}(x)$ or $f_{12}(x)$, because in these two graph there are no other pairs of points which are symmetric with respect to $y-$axis.\\

Note that the expressions of $h_1(x)$ and $h_2(x)$ are the same as $f_1(x)$ in Theorem \ref{coordi1}, then there is only a limit cycle $C_1$ which intersects the graphs of $h_1(x)$ and $h_2(x)$. Combining the limit cycles $C_2$, $C_3$ and $C_0$ which intersect the graphs of $h_1(x)$, $f_2(x)$, $f_3(x)$ and $g_1(x)$, $g_2(x)$ respectively, as in Theorem \ref{coordi1}, we know that there only 6 limit cycles in this piecewise linear system, and their configuration is topologically equivalent to the configuration $\mathcal{C}=\{C_0,C_1,C_2,C_3,C_{11},C_{12}\}$ in the condition of Theorem \ref{thm2}. Thus we complete the proof.
\end{proof}

From the proof of Theorem \ref{thm2}, we can see that the curve in proof of Theorem \ref{coordi1} takes a crucial role in the construction of piecewise linear system. Since this kind of curve resembles a key to a door, we give it a name: \textit{Key Curve} with n teeth. The curve in Figure 2 is a Key Curve with 3 teeth.\\

To speak concisely, we say that the curve in proof of Theorem \ref{thm2} is obtained by substituting the first tooth in the Key Curve in Figure 2 for a smaller Key Curve with 2 teeth. So if more teeth in curve on the proof of Theorem \ref{coordi1} are replaced by smaller Key Curves, we can get more complicated separation line to realize more complicated configuration. This deduces the proof of Theorem (A) in section 1.\\

\begin{proof}[Proof of Theorem (A)]
Based on similar substitution method in the proof of Theorem \ref{thm2}, we can modify the separation line in Theorem \ref{coordi1} to construct a piecewise linear system $(S_1)$ to realize a given configuration $\mathcal{C}$ $=$ $\{$ $C_0$,$C_1$,$C_{11}$,$\cdots$, $C_{1k_1}$, $C_2$,$C_{21}$,$\cdots$, $C_{2k_2}$,$\cdots$,$C_n$,$C_{n1}$,$\cdots$, $C_{nk_n}\}$ where $C_0$ contains coordinated $C_1$, $C_2$, $\cdots$, $C_n$ and meanwhile $C_i$ contains coordinated $C_{i1}$, $C_{i2}$, $\cdots$, $C_{ik_i}$, $(i=1$, $2\cdots$ $n)$.\\

More generally, if there is a configuration $\mathcal{C}$ $=$ $\{C_0$, $C_1$,$C_1^{(2)}$,$\cdots,C_1^{(l)}$, $C_{11}$, $\cdots$, $C_{1k_1}$,$C_2$,$C_{21}$,$\cdots$, $C_{2k_2}$,$\cdots$,$C_n$,$C_{n1}$,$\cdots$, $C_{nk_n}\}$ where $C_1$ contains $C_1^{(2)}$, $\cdots$, $C_1^{(l)}$ and $C_1^{i}$ contains $C_1^{i+1}$, $(i=2,\cdots l-1)$, meanwhile $C_1^{l}$ contains $C_{11}$,$\cdots C_{1k_1}$, then we can also use the same method as that in Corollary \ref{corollary} to get a piecewise linear system $(S_2)$ to realize it.\\

If there is a more complicated configuration in which $C_{11}$ also contains coordinated $C_{111}$, $C_{112}$ $\cdots$ $C_{11m_{11}}$, then let us modify $(S_2)$ by replacing the tooth in the position of $C_{11}$ with a smaller Key Curve with $m_{11}$ teeth like the method in proof of Theorem \ref{thm2}, we obtain a new separation line, as well as a new piecewise linear system to realize this configuration.\\

If the limit cycle $C_0$ containing other limit cycles does not exist, we can choose special functions $g_1(x)$ and $g_2(x)$ to make it vanish.\\

For any given finite configuration of limit cycles, since the number of limit cycles is finite, we can continue the above steps finite times to obtain a piecewise linear system to realize it. Then we complete the proof of the Theorem (A).
\end{proof}

\end{document}